\DeclarePairedDelimiter{\floor}{\lfloor}{\rfloor}
\theoremstyle{plain}
\newtheorem{theorem}{Theorem}[section]
\newtheorem{corollary}[theorem]{Corollary}
\newtheorem{proposition}[theorem]{Proposition}
\theoremstyle{definition}
\theoremstyle{remark}
\numberwithin{equation}{section}
\newcommand{\N}{\mathbbm{N}}
\newcommand{\R}{\mathbbm{R}}
\newcommand{\Z}{\mathbbm{Z}}
\newcommand{\Rplus}{\mathbbm{R}_{\geq 0}}
\newcommand{\p}{\mathbbm{P}}
\newcommand{\cA}{\mathcal{A}}
\newcommand{\cG}{\mathcal{G}}
\newcommand{\cF}{\mathcal{F}}
\newcommand{\cH}{\mathcal{H}}
\newcommand{\E}[1]{\mathbbm{E}\left [ \, #1 \, \right ]}
\renewcommand{\epsilon}{\varepsilon}
\renewcommand{\phi}{\varphi}
\newcommand{\pspace}{(\Omega,\cA,\p)}
\newcommand{\intd}[1]{\,\mathrm{d}#1}
\newcommand{\norm}[1]{\left\lVert #1 \right\rVert}
\begin{document}

\title{A Bernstein Inequality For Spatial Lattice Processes\footnote{This research was partially supported by the Fraunhofer ITWM, 67663 Kaiserslautern, Germany and by the RTG 1932 Stochastic Models for Innovations in the Engineering Sciences funded by the German Research Foundation (DFG).}
}

\author{
Eduardo Valenzuela-Dom{\'i}nguez\footnote{Departamento de Matem{\'a}tica, Universidad T{\'e}cnica Federico Santa Mar{\'i}a, Valpara{\'i}so, Chile, email: \tt{eduardo.valenzuela@usm.cl}}
\and
Johannes T. N. Krebs\footnote{Department of Mathematics, University of Kaiserslautern, 67653 Kaiserslautern, Germany, email: \tt{krebs@mathematik.uni-kl.de} }\;\footnote{Corresponding author}
\and
J{\"u}rgen E. Franke\footnote{Department of Mathematics, University of Kaiserslautern, 67653 Kaiserslautern, Germany, email: \tt{franke@mathematik.uni-kl.de} }
}

\date{\today}
\maketitle
\linespread{1.1}
\begin{abstract}
\setlength{\baselineskip}{1.8em}
In this article we present a Bernstein inequality for sums of random variables which are defined on a spatial lattice structure. The inequality can be used to derive concentration inequalities. It can be useful to obtain consistency properties for nonparametric estimators of conditional expectation functions. \medskip\\
\noindent {\bf Keywords:} Asymptotic inference; Asymptotic inequalities; Bernstein inequality; Concentration inequality; Nonparametric statistics; Spatial Lattice Processes; Strong mixing\\
\noindent {\bf MSC 2010:} Primary: 62G20;	62M40; 37A25; Secondary: 62G05; 62G09
\end{abstract}

\section{Introduction}~\label{Introduction}
Inequalities of the Bernstein type are a major tool for the asymptotic analysis in probability theory and statistics. The original inequality published by \cite{bernstein1927} considers the case $\p( |S_n| \ge \epsilon)$, where $S_n=\sum_{k=1}^n Z_k$ for real-valued zero-mean random variables $Z_1,\ldots,Z_n$ which are independent and identically distributed and bounded. A short proof is given in \cite{bosq2012linear} together with a demonstration how Hoeffding's inequality (\cite{hoeffding1963probability}) can be concluded too. A version for independent multivariate random variables is given by \cite{ahmad2013probability}.

Starting with Collomb's and Carbon's inequalities (\cite{collomb1984proprietes} and \cite{carbon1983}), during the last thirty years there have been derived various generalizations of Bernstein's inequality to stochastic processes $\{ Z(t): t\in\Z\}$ under the assumption of weak dependence (\cite{bryc1996large} and \cite{merlevede2009}). The corresponding definitions of dependence and their interaction properties can be found in \cite{doukhan1994mixing} and in \cite{bradley2005basicMixing}.

Furthermore, there are inequalities of the Bernstein-type which are tailored to special mathematical questions: \cite{arcones1995bernstein} develop Bernstein-type inequalities for $U$-statistics. \cite{krebs2017_BernsteinExpGraph} gives an exponential inequality for strongly mixing random fields which are defined on exponentially growing graphs.

Bernstein inequalities often find their applications when deriving large deviation results or (uniform) asymptotic consistency statements in nonparametric regression and density estimation: \cite{valenzuela1995} considers nonlinear function estimation on random random fields under mixing conditions. Such statistical procedures are also widely used in image analysis, where the image is modeled as a given function on part of the integer lattice $\Z^2$ contaminated by additive noise. Frequently, the noise is assumed to consist of independent and identically distributed random variables, but this assumption is not always realistic, compare e.g., \cite{daul1998fast}. A more general noise model is provided by stationary stochastic processes, e.g., by Markov random fields. For such processes, functions like conditional probability densities or conditional expectations of an observation given data in a neighborhood may also be estimated by nonparametric procedures \cite{tran1990kernel}. For investigating the asymptotic properties of those estimation procedures a Bernstein inequality for spatial stochastic processes on an integer lattice is needed. For continuous-parameter processes on $\R^2$, such a result has been derived in \cite{bertail2000subsampling}. Here, we provide a Bernstein inequality for stochastic processes on $\Z^N$ under rather general conditions, e.g., assuming only $\alpha$-mixing which is a rather weak type of mixing condition. To allow for other applications, e.g., to spatial-temporal processes
used in modeling environmental data like precipitation or pollution, we do not restrict ourselves to the plane but consider integer lattices in arbitrary dimensions.

This paper is organized as follows: we give the main definitions and notation in Section~\ref{Section_DefinitionsNotation}. In Section~\ref{Section_ExponentialInequalities} we present the Bernstein inequality for random fields on a lattice $\Z^N$ and further concentration inequalities, it is the main part of this article.

\section{Definitions and Notation}\label{Section_DefinitionsNotation}

In this section we give the mathematical definitions and notation which we shall use to derive the results. We work on a probability space $\pspace$. Let $N\in\N$ be a natural number. A real-valued random field $Z$ which is indexed by $\Z^N$ is a collection of random variables $\{Z(s): s\in \Z^N\}$. We write $d_{\infty}$ for the metric on the lattice $\Z^N$ which is induced by the Euclidean-$\infty$-norm, i.e., $d_{\infty}(s,t) = \max\{ |s_i - t_i| : 1\le i \le N \}$ for $s,t\in \Z^N$. Denote for two subsets $I, J\subseteq \Z^N$ their distance by
$$ d_{\infty}(I,J) = \inf\{ d_{\infty}(s,t): s\in I, t\in J \}. $$
Furthermore, we write $s\le t$ if and only if $s_i \le t_i$ for $i=1,\ldots,N$.

The $\alpha$-mixing coefficient is introduced by \cite{rosenblatt1956central}. It is defined by
\begin{align*}
		\alpha(\cF,\cG) \coloneqq \sup \left\{  \left| \p(A\cap B)-\p(A)\p(B)		\right|: A\in\cF, B\in\cG \right\}
\end{align*}
for two sub-$\sigma$-algebras $\cF$ and $\cG$ of $\cA$. Note that $\alpha(\cF,\cG) \le 1/4$, compare \cite{bradley2005basicMixing}. If $X$ and $Y$ are two random variables on $\pspace$, then $\alpha(X,Y)$ is the mixing coefficient $\alpha(\sigma(X), \sigma(Y) )$. Furthermore, for a random field $\{Z(s): s\in \Z^N\}$ and a subset $I\subseteq\Z^N$, denote by $\cF(I) \coloneqq \sigma( Z(s): s\in I )$ the $\sigma$-algebra generated by the $Z(s)$ in $I$. The $\alpha$-mixing coefficient of the random field $Z$ is then defined as
\begin{align}\label{StrongSpatialMixing}
	\alpha(k) \coloneqq \sup_{ \substack{I, J \subseteq \Z^N,\\ d_{\infty}(I,J)\ge k }} \sup_{ \substack{ A \in \cF(I),\\ B \in \cF(J) } } \left| \p(A\cap B)-\p(A)\p(B)		\right|, \quad k\in\N.
\end{align}
The random field $Z$ is said to be strongly (spatial) mixing (or $\alpha$-mixing) if $\alpha(k)\rightarrow 0$ ($k\rightarrow \infty$).

We write $e_N = (1,\ldots,1)$ for the element in $\Z^N$ which only contains ones. Let $n=(n_1,\ldots,n_N)\in \N^N$, then we write $I_n$ for the $N$-dimensional cube on the lattice which is spanned by $e_N$ and $n$, i.e., $I_n = \{ k\in \Z^N: e_N\le k \le n \}$.

\section{Exponential inequalities for \texorpdfstring{$\alpha$}{alpha}-mixing processes on \texorpdfstring{$N$}{N}-dimensional lattices}\label{Section_ExponentialInequalities}

\begin{theorem}[Bernstein inequality]\label{BernsteinLattice}
Let $Z:=\{ Z(s): s \in \Z^N \}$ be a real-valued random field defined on the $N$-dimensional lattice $\Z^N$. Each $Z(s)$ is bounded by a uniform constant $B$, has expectation zero and the variance of $Z(s)$ is uniformly bounded by $\sigma^2$. Let $Z$ be strongly mixing with mixing coefficients $\{\alpha(k): k\in \N\}$. Set $\bar{\alpha}_k := \sum_{u=1}^k u^{N-1} \alpha(u)$. Let $P(n) = (P_1(n_1),\ldots,P_N(n_N) )$ and $Q(n)=(Q_1(n_1),\ldots,Q_N(n_N) )$ be arbitrary non-decreasing sequences in $\N^N$ which are indexed by $n\in\N^N$ and which satisfy for each $1\le k \le N$
\begin{align}
		&1 \le Q_k(n_k) \le P_k(n_k) < Q_k (n_k) + P_k(n_k) < n_k. \label{EqBernsteinLattice0a}
\end{align}
Furthermore, let ${ \bf n } := |I_n| = n_1\cdot \ldots \cdot n_N$, ${ \bf P } := P_1(n_1) \cdot \ldots \cdot P_N (n_N)$ and $\underline{q} := \min\left\{ Q_1(n_1),\ldots,Q_N(n_N) \right\}$ as well as $\overline{p} := \max\left\{ P_1(n_1), \ldots, P_N(n_N) \right\}$. Then for all $\epsilon > 0$ and $\beta >0$ such that $2^{N+1} B { \bf P } e \beta < 1$
\begin{align}\begin{split}\label{EqBernsteinLattice0b}
		\p\left( \left|\sum_{s \in I_n} Z(s) \right| \ge \epsilon \right) &\le 2 \exp\left\{ 12 \sqrt{e} 2^N \frac{{ \bf n }}{{ \bf P }} \alpha(\underline{q})^{ { \bf P } \big/ \left[ { \bf n } \left(2^N+1\right) \right] }	\right\} \\
		&\qquad\qquad\qquad \cdot  \exp\Big\{	-\beta \epsilon + 2^{3N} \beta^2 e \left(	\sigma^2 + 12  B^2\, \gamma \bar{\alpha}_{\overline{p}}\right) { \bf n }	\Big\},
\end{split}\end{align}
where the constant $0<\gamma<\infty $ depends on the lattice dimension $N$.
\end{theorem}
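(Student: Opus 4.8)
\textbf{Proof strategy.} The plan is to run Bernstein's classical ``big blocks / small blocks'' argument, lifted to the $N$-dimensional lattice. First I would decompose, separately in each coordinate direction $k\in\{1,\dots,N\}$, the index set $\{1,\dots,n_k\}$ into consecutive intervals that alternate between a \emph{large} block of length $P_k(n_k)$ and a \emph{small} block of length $Q_k(n_k)$; inequality \eqref{EqBernsteinLattice0a} is exactly what makes such an alternating pattern fit. Forming Cartesian products over the $N$ directions partitions the cube $I_n$ into $2^N$ families of rectangular blocks indexed by subsets $T\subseteq\{1,\dots,N\}$ (the block is ``large'' in direction $k$ iff $k\in T$), and writes
\[
S:=\sum_{s\in I_n}Z(s)=\sum_{T\subseteq\{1,\dots,N\}}S_T,
\]
where $S_T$ is the sum of all block sums of family $T$. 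The key geometric fact is that within any single family two distinct blocks are $d_\infty$-separated by at least $\underline q$, since in each direction the gap between consecutive same-family blocks equals either $P_k(n_k)$ or $Q_k(n_k)$, and both are $\ge\underline q$.

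Next I would pass to exponential moments. By the Chernoff bound $\p(S\ge\epsilon)\le e^{-\beta\epsilon}\E{e^{\beta S}}$ and the generalized Hölder inequality over the $2^N$ families,
\[
\E{e^{\beta S}}=\E{\prod_{T}e^{\beta S_T}}\le\prod_{T}\bigl(\E{e^{2^N\beta S_T}}\bigr)^{2^{-N}};
\]
using Hölder here (rather than a union bound) is what keeps the constant $2$, and not $2^{N+1}$, in front of the final bound. Now fix a family $T$ and enumerate its blocks $B_1,\dots,B_m$ with block sums $W_j=\sum_{s\in B_j}Z(s)$. Each $W_j$ is a sum of at most $\mathbf{P}$ variables bounded by $B$, so $|W_j|\le B\mathbf{P}$, and the hypothesis $2^{N+1}B\mathbf{P}e\beta<1$ forces $2^N\beta B\mathbf{P}<(2e)^{-1}$, hence $e^{2^N\beta|W_j|}<\sqrt e$. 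Because consecutive blocks of family $T$ are $\underline q$-separated, I would apply a Volkonskii--Rozanov / Bradley-type coupling lemma, one block at a time, to replace $(W_1,\dots,W_m)$ by mutually independent variables with unchanged marginals, at a total cost bounded in terms of $m$, of $\alpha(\underline q)$, and of the crude bound $\sqrt e$ just obtained. This gives
\[
\E{e^{2^N\beta S_T}}\le\prod_{j=1}^{m}\E{e^{2^N\beta W_j}}+(\text{coupling error}),
\]
and after recombining the $2^N$ families the aggregated coupling error produces precisely the first exponential factor of \eqref{EqBernsteinLattice0b}; the fractional power $\mathbf{P}/[\mathbf{n}(2^N+1)]$ on $\alpha(\underline q)$ comes out of balancing the number of blocks against the coupling penalty.

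For the product $\prod_j\E{e^{2^N\beta W_j}}$ of the now-independent block sums I would use the standard bounded-variable estimate $\E{e^{tW}}\le\exp(c\,t^2\Var(W))$, valid with an absolute $c$ once $t\|W\|_\infty$ is small, which holds here; thus $\prod_j\E{e^{2^N\beta W_j}}\le\exp\bigl(c\,2^{2N}\beta^2\sum_j\Var(W_j)\bigr)$. The variance bookkeeping is then routine: $\Var(W_j)=\sum_{s,t\in B_j}\mathrm{Cov}(Z(s),Z(t))$, the diagonal contributes at most $|B_j|\sigma^2$, and for $s\ne t$ the covariance inequality for bounded $\alpha$-mixing variables gives $|\mathrm{Cov}(Z(s),Z(t))|\le 4B^2\alpha(d_\infty(s,t))$, so summing over the $O_N(u^{N-1})$ lattice points at $d_\infty$-distance $u$ — which is where the dimension-dependent constant $\gamma$ enters — and over $u\le\overline p$ bounds the off-diagonal part by $12B^2\gamma\bar\alpha_{\overline p}|B_j|$. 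Since the blocks of a family are disjoint in $I_n$, $\sum_j|B_j|\le\mathbf{n}$, and summing over all blocks and all $2^N$ families (tracking the dimension-dependent constants through the recursion) yields the second exponential factor $\exp\{-\beta\epsilon+2^{3N}\beta^2 e(\sigma^2+12B^2\gamma\bar\alpha_{\overline p})\mathbf{n}\}$. Applying the same estimate to $-Z$ and adding the two bounds gives the prefactor $2$ and finishes the proof.

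\textbf{Main obstacle.} I expect the delicate step to be the coupling: one needs a blocking lemma that controls \emph{real} exponential moments of block sums (not merely characteristic functions), uniformly over the $2^N$ families and through the $N$-fold directional recursion, and one must carry the dimension-dependent constants ($2^N$, $2^{3N}$, $\gamma$) and the interpolation producing the fractional exponent $\mathbf{P}/[\mathbf{n}(2^N+1)]$ on $\alpha(\underline q)$. By comparison the variance computation and the final Chernoff optimization over $\beta$ are routine.
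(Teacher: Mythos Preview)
Your overall architecture matches the paper's proof: the $2^N$-family block decomposition, passing from $\beta$ to $2^N\beta$ via AM--GM/H\"older, bounding block variances through the covariance inequality together with the shell count $(2u+1)^N-(2u-1)^N\le\gamma u^{N-1}$, and doubling via $-Z$. The one place where your proposal diverges, and where you rightly flag the main obstacle, is the decoupling step.

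The paper does \emph{not} use a Volkonskii--Rozanov or Bradley coupling. Instead, with $\delta=2^N\beta$, it peels off one block at a time via Davydov's covariance inequality (Proposition~\ref{davydovsIneq}):
\[
\bigl|\E{e^{\delta T(l,r-1)}e^{\delta S(l,r)}}-\E{e^{\delta T(l,r-1)}}\E{e^{\delta S(l,r)}}\bigr|\le 12\,\alpha(\underline q)^{1/a}\,\|e^{\delta S(l,r)}\|_\infty\,\|e^{\delta T(l,r-1)}\|_b,
\]
with H\"older exponents chosen as $a=1+r$, $b=1+1/r$. The choice of $b$ is the crux: it forces $b^{\,i-1}\le(1+1/r)^r\le e$ for every $i\le r$, so after iterating $r-1$ times the accumulated H\"older exponent stays bounded by $e$; this is why the hypothesis $2^{N+1}B\mathbf{P}e\beta<1$ carries the factor $e$ and why the variance term carries $2^{3N}e$. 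At the end one sets $r=\mathbf{R}$, and since $\mathbf{R}\le 2^N\mathbf{n}/\mathbf{P}$ one gets $1/a=1/(1+\mathbf{R})\ge \mathbf{P}/[(2^N+1)\mathbf{n}]$ --- this, not a ``balancing'' argument, is the exact origin of the fractional power on $\alpha(\underline q)$ in \eqref{EqBernsteinLattice0b}.

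Your proposed coupling route, as written, runs into the difficulty that $\|e^{\delta T(l,r-1)}\|_\infty$ can be as large as $e^{(r-1)/2}$; a block-by-block coupling bound that uses only the $\sqrt e$ bound on a \emph{single} block factor will, when iterated, pick up this exponential growth and the error term swamps the main term. The paper sidesteps this precisely by working in $L^b$ with $b\downarrow 1$ rather than in $L^\infty$. A more refined Berbee/Bradley-type coupling could in principle be made to work and would then yield $\alpha(\underline q)$ to the first power (hence a sharper inequality), but that is a genuinely different argument and would not reproduce \eqref{EqBernsteinLattice0b} as stated.
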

\begin{proof}
We write $S_n = \sum_{s\in I_n} Z(s)$ for $n\in \N^N$. To exploit the mixing property we want to decompose the sum $S_{n}$ into
different parts which consist of sums over groups of the $Z(s).$ Using
the mixing condition, most of these subsums are only weakly dependent. To simplify notation, we write
$$P \equiv P(n) \equiv (P_1, \ldots, P_N),\quad Q
\equiv Q(n) \equiv (Q_1, \ldots, Q_N)$$
keeping the dependence on ${n}$ in mind. We choose a corresponding
sequence $R \equiv R(n) \equiv (R_1, \ldots, R_N)$ such that
\begin{equation}\label{EqBernsteinLattice1}
(R_k-1)(P_k+Q_k) < n_k \le R_k(P_k + Q_k) =: n_k^* \text{ for each } k=1, \ldots, N.
\end{equation}
For the $k$-th coordinate direction, we partition the summation index set
$\{1, \ldots, n_k^*\} \supseteq \{ 1, \ldots, n_k\}$ into $R_k$ subsets each
consisting of two disjoint intervals of length $P_k$ and $Q_k$ resp. So, we
have a union of $2 R_k$ intervals half of them of length $P_k$, the other half
of length $Q_k,$ covering the set $\{1, \ldots, n_k\}.$ 

Combining the partitions in all $N$ coordinate directions, we get a partition of the
$N$-dimensional rectangle $I_{n^*} = \{ s \in \Z^N; e_N \le s \le n^* \} \supseteq I_{n}$ into ${ \bf R } = R_1 \cdot \ldots \cdot R_N$ blocks containing $(P_1 + Q_1) \cdot \ldots \cdot (P_N+Q_N)$ points of the $N$-dimensional integer lattice each. Within each block, there are $2^N$ smaller subsets, which are $N$-dimensional rectangles with all edges of length either $P_k$ or $Q_k,\ k=1, \ldots, N$. Write $I(l,u)$ for the $l$-th subset in the $u$-th block, $l=1,\ldots,2^N$ and $u=1,\ldots,{ \bf R }$. Note that the diameter w.r.t.\ $d_{\infty}$ of the rectangular set $I(l,u)$ is bounded by $\overline{p}$, since
\begin{align}\label{EqBernsteinLattice2}
		\operatorname{diam} \{I(l,u)\} = \max\{d_\infty (s,t), s,t \in I(l,u)\} \le \max \{ P_1,\ldots,P_N \} = \overline{p}.
\end{align}
Its cardinality is at most $\operatorname{card} \{I(l,u)\} \le \prod_{k=1}^N \max \{ P_k, Q_k\} = \prod_{k=1}^N P_k = { \bf P }$ (cf. \eqref{EqBernsteinLattice0a}. Now we can partition the sum $S_{n} = \sum_{s \in I_n} Z(s)$ as follows
\begin{equation*}
S_{n} = \sum_{l=1}^{2^N} \sum_{u=1}^{{ \bf R }} \sum_{ s \in I(l,u)}
            Z(s) =  \sum_{l=1}^{2^N} \sum_{u=1}^{{ \bf R }} S(l,u)
          = \sum_{l=1}^{2^N} T(l,{ \bf R })
\end{equation*}
with $S(l,u) = \sum_{ s \in I(l,u)} Z(s)$ and $T(l,r) = \sum_{u=1}^r S(l,u)$, for $r=1,\ldots,{ \bf R }$. We have the recursive property
\begin{equation}\label{EqBernsteinLattice3}
T(l,r) = T(l,r-1) + S(l,r) \text{ and } T(l,0) = 0.
\end{equation}
Now we can apply this decomposition to the exponential $e^{\beta S_{n}}$ as follows
\begin{eqnarray} \label{EqBernsteinLattice4}
\E{ e^{\beta S_{n}} }&=  \E{ e^{\beta \sum_{l=1}^{2^N}  T(l,{ \bf R })} } = \E{ \prod_{l=1}^{2^N} e^{\beta T(l,{ \bf R })} } \le \E{ 2^{-N} \sum_{l=1}^{2^N} e^{2^N \beta  T(l,{ \bf R })}} 
\end{eqnarray}
where we have used the well-known inequality between geometric and arithmetic mean. Setting $\delta = 2^N \beta$ we have $\E{ e^{\beta S_{n}} } \leq 2^{-N} \sum_{l=1}^{2^N} \E{ e^{\delta T(l,{ \bf R })} }$. Now, we  study $\E{ e^{\delta T(l,r)} }$ for $l=1,\ldots,2^N$ and
$r=1,\ldots,{ \bf R }.$ By \eqref{EqBernsteinLattice3}
\begin{eqnarray*}
\E{ e^{\delta T(l,r)} } & = & \E{ e^{\delta T(l,r-1)}e^{\delta S(l,r)} }\\
                      & \leq & \left|\E{ e^{\delta T(l,r-1)}e^{\delta S(l,r)} } -
                               \E{e^{\delta T(l,r-1)} } \E{ e^{\delta S(l,r)} } \right| +
                              \left |\E{ e^{\delta T(l,r-1)} } \E { e^{\delta S(l,r)} } \right|.
\end{eqnarray*}
But $T(l,r-1)$ is ${\cal F}(I(l,1)\cup \cdots \cup I(l,r-1)) =:
{\cal F}(J(l,r-1))$-measurable and $S(l,r)$ is ${\cal F}(I(l,r))$-measurable, this implies that $e^{\delta T(l,r-1)}$ is ${\cal F}(J(l,r-1))$-measurable and $e^{\delta S(l,r)}$ is ${\cal F}(I(l,r))$-measurable. Since $Z(s)$ is bounded and the minimal distance between the sets $J(l,r-1)$ and $I(l,r)$ is $d_{\infty}(J(l,r-1),I(l,r)) \geq \min \{ Q_1,\ldots,Q_N \} = \underline{q} ,$
we can apply Davydov's inequality (compare \ref{davydovsIneq}) as follows
\begin{equation*}
\left| \E{ e^{\delta T(l,r-1)}e^{\delta S(l,r)}}  - \E{ e^{\delta T(l,r-1)}} \E{ e^{\delta S(l,r)} } \right|
\leq
12 \alpha(\underline{q})^{1/a}
\|e^{\delta S(l,r)}\|_{\infty}
\|e^{\delta T(l,r-1)}\|_b
\end{equation*}
with $a,b \geq 1$ such that $\frac{1}{a} + \frac{1}{b} = 1$, therefore
\begin{equation}\label{EqBernsteinLattice5}
	\E{ e^{\delta T(l,r)} }
\leq
12\alpha(\underline{q})^{1/a}
\|e^{\delta S(l,r)}\|_{\infty}
\|e^{\delta T(l,r-1)}\|_b
+
	\E{ e^{\delta T(l,r-1)} } \E{e^{\delta S(l,r)} }.
\end{equation}
As $|S(l,r)| \leq \sum_{ s \in I(l,r)} |Z(s)| \le B { \bf P } $ and choosing
\[
0 < \beta \leq \frac{1}{2^{N+1} B { \bf P } e},\ \text{i.e.,\ } 0 < \delta \le \frac{1}{2B{ \bf P } e}
\]
we have $\delta S(l,r) | \leq 1/(2e)$ and for all $D$ such that $0 \leq D \leq e$
\begin{equation}\label{EqBernsteinLattice6}
|\delta D S(l,r)| \leq \frac{1}{2}
\end{equation}
which implies
\begin{eqnarray}\label{EqBernsteinLattice7}
	\norm{ e^{\delta D S(l,r)} }_{\infty} & \leq & \sqrt{e}.
\end{eqnarray}
Using \eqref{EqBernsteinLattice6}, we have $e^{\delta D S(l,r)}  \le  1 + \delta D S(l,r) + (\delta D S(l,r))^2$. Next, we take expectations of this inequality and use that the $Z(s)$ have expectation zero as well as that the inequality $1 + x \le \exp x$ is true for all $x\ge 0$. We obtain
\begin{align} \label{EqBernsteinLattice8}
	\E{ e^{\delta D S(l,r)} } \le 1 + {\delta}^2 D^2 \E{ S(l,r)^2 } \le e^{ {\delta}^2 D^2 \E{ S(l,r)^2} }.
\end{align}
Now we have to evaluate $E[S(l,r)^2]$:
\begin{align*}
\E{ S(l,r)^2 }  = \E{ \left( \sum_{ s \in I(l,r)} Z(s) \right)^2 } = \sum_{ s \in I(l,r)} \E{ Z(s) ^2 } + \sum_{s \in I(l,r)} \sum_{{t \in I(l,r), t \not= s}} \E{Z(s) Z(t)} 
\end{align*}
We know that $|Z(s)| \leq B$, so $|\E{ Z(s) Z(t) }| \le 12 B^2 \alpha(d_{\infty}(s,t) )$ and using $\E{ Z(s)^2 } \le \sigma^2<\infty,$  we have
\[
	\E{ S(l,r)^2 } \leq \sigma^2 { \bf P } +
12 B^2 \sum_{s \in I(l,r)} \sum_{{t \in I(l,r), t \not= s}}
\alpha(d_{\infty}(s,t))
\]
In order to evaluate the double sum,  note that if $s,t \in I(l,r), s \not= t$, then by \eqref{EqBernsteinLattice2} $d_{\infty}(s,t)$ assumes values between 1 and $\overline{p}$, i.e., $1 \leq d_{\infty}(s,t) \le \overline{p}$. Furthermore, for a general point $s\in\Z^N$ the cardinality of the set of points $t\in\Z^N$ whose distance to $s$ is exactly $u$ is $\operatorname{card}\{t \in \Z^N : d_{\infty}(s,t) = u \} = (2u+1)^N - (2u-1)^N \le \gamma u^{N-1}$ for $u\ge 1$, where $\gamma$ is a constant which depends on the lattice dimension $N$. Thus, the double sum can be bounded as follows
\begin{align*}
	\sum_{s\in I(l,r)} \sum_{t \in I(l,r), t \not= s} \alpha(d_{\infty}(s,t)) & \le \sum_{s\in I(l,r)} \sum_{u=1}^{\overline{p}} \sum_{t\in\Z^N: d_{\infty}(s,t) = u } \alpha(u) \\
	&\le \sum_{s\in I(l,r)} \sum_{u=1}^{\overline{p}}  \alpha(u) \left\{ (2u+1)^N - (2u-1)^N \right\} \le \gamma\, { \bf P } \sum_{u=1}^{\overline{p}}  \alpha(u) u^{N-1}.
\end{align*}
So, we have
\begin{equation}\label{EqBernsteinLattice9}
E[S(l,r)^2] \leq  \sigma^2 { \bf P } + 12 B^2 \gamma \bar{\alpha}_p { \bf P }.
\end{equation}
From \eqref{EqBernsteinLattice8} we obtain $\E{ e^{\delta D S(l,r)} } \le \exp(\delta^2 D^2 (\sigma^2 { \bf P } + 12 B^2 \gamma \bar{\alpha}_p { \bf P })$. We set $V \coloneqq \sigma^2 { \bf P } + 12 B^2 \gamma \bar{\alpha}_p { \bf P }$ and $D=1$. Thus, it follows from \eqref{EqBernsteinLattice5}
\begin{equation*}
	\E{ e^{\delta T(l,r)} } \leq 12 \alpha(\underline{q})^{1/a} \norm{ e^{\delta S(l,r)}}_{\infty} \norm{ e^{\delta T(l,r-1)}}_b + \E{ e^{\delta T(l,r-1)} } e^{\delta^2 V}.
\end{equation*}
But by H\"older's inequality  $\E{ e^{\delta T(l,r-1)} } \leq \norm{ e^{\delta T(l,r-1)}}_b$, so we obtain
\begin{equation}\label{EqBernsteinLattice10} 
	\E{ e^{\delta T(l,r)} } \le \left( e^{\delta^2 V} + 12 \alpha(\underline{q})^{1/a}
\norm{ e^{\delta S(l,r)}}_{\infty} \right) \norm{ e^{\delta T(l,r-1)} }_b.
\end{equation}
Now let $a= 1 + r$ and $b = 1 + 1/r$ such that for all $i=1,\ldots,r$, we have
\begin{equation}\label{EqBernsteinLattice11}
1 \leq b^{i-1} \leq \left ( 1 + \frac{1}{r} \right )^r \leq e.
\end{equation}
Then we obtain successively as in deriving \eqref{EqBernsteinLattice10} the following inequalities for $r \geq 2$:
\begin{eqnarray*}
\norm{e^{\delta T(l,r-1)}}_b
& \leq &
\left( e^{\delta^2 b^2 V} +
12 \alpha(\underline{q})^{1/a} \norm{e^{\delta b S(l,r-1)}}_{\infty} \right)^{1/b}
\norm{e^{\delta T(l,r-2)}}_{b^2}\\
\norm{e^{\delta T(l,r-2)}}_{b^2}
& \leq &
\left( e^{\delta^2 b^4 V} +
12 \alpha(\underline{q})^{1/a} \norm{e^{\delta b^2 S(l,r-2)}}_{\infty} \right)^{1/b^2}
\norm{e^{\delta T(l,r-3)}}_{b^3}\\
& \vdots & \\
\norm{e^{\delta T(l,2)}}_{b^{r-2}}
& \leq &
\left( e^{\delta^2 b^{2(r-2)} V} +
12 \alpha(\underline{q})^{1/a} \norm{e^{\delta b^{r-2} S(l,2)}}_{\infty} \right)^{ 1 / b^{r-2}  }
\norm{e^{\delta T(l,1)}}_{b^{r-1}}.
\end{eqnarray*}
Substituting, we get:
\begin{equation}\label{EqBernsteinLattice12}
 \E{ e^{\delta T(l,r)} } \le \left [ \prod_{i=1}^{r-1}
( e^{ \delta^2 b^{2(i-1)}V} + 12\alpha(\underline{q})^{1/a}
\|e^{\delta b^{i-1} S(l,r-i+1)} \|_{\infty} )^{1/b^{i-1}} \right ]
 \E{ e^{\delta b^{r-1} T(l,1)} }^{1/b^{r-1}}
\end{equation}
but $b^{i-1} \leq e$ for $i=1,\ldots,r$, such that $\|e^{\delta b^{i-1} S(l,r-i+1)}\|_{\infty} \leq \sqrt{e}$ by \eqref{EqBernsteinLattice7} and even further
\begin{eqnarray*}
( e^{ \delta^2 b^{ 2(i-1) } V } +
12 \alpha(\underline{q})^{ 1/a } \sqrt{e} )
^{ 1/ b^{i-1} }
& \leq &
e^{ \delta^2 b^{i-1} V } ( 1 + 12 \alpha(\underline{q})^{ 1/a }
\sqrt{e} )^{ 1/b^{i-1} }\\
& \leq &
\exp \left\{ \delta^2 b^{i-1} V  + \frac{ 12 \sqrt{e} \alpha(\underline{q})
^{ 1/a } }{ b^{i-1} } \right \}\\
& \leq &
\exp \left\{ \delta^2 b^{i-1} V \right\} \exp\left\{ 12 \sqrt{e} \alpha(\underline{q})^{ 1/a } \right\}
\end{eqnarray*}
by \eqref{EqBernsteinLattice11}. Therefore, again using \eqref{EqBernsteinLattice11}
\begin{align*}
\prod_{i=1}^{r-1}
(
e^{ \delta^2 b^{ 2(i-1) } V }
+
12 \alpha(\underline{q})^{ 1/a } \sqrt{e}
)
^{
1/ b^{i-1}
 }
 \leq 
\prod_{ i=1 }^{r-1}
e^{12 \sqrt{e}\alpha(\underline{q})^{1/a}
+
\delta^2 e V} =
\exp \left\{12 \sqrt{e}\alpha(\underline{q})^{1/a} (r-1)
+
\delta^2 e V (r-1) \right\}.
\end{align*}
Since $b^{r-1} \leq e$ by \eqref{EqBernsteinLattice11}, and using \eqref{EqBernsteinLattice8} and
\eqref{EqBernsteinLattice9} we have
\begin{align*}
\|e^{\delta T_{l,1}}\|_{b^{r-1}} &\le \|e^{\delta T(l,1)}\|_e = \E{ e^{\delta e T(l,1)}}^{ 1/e } \le \E{e^{\delta eS(l,1)}}^{ 1/e }\\
          & \le (e^{\delta^2 e^2 \E{S(l,1)^2}})^{ 1 /e } \le (e^{\delta^2 e^2 V})^{1/e } =  \exp\{ \delta^2 e V \}.
\end{align*}
Combining these results, we get from \eqref{EqBernsteinLattice12}  for $l=1,\ldots,2^N$ and $r=1,\ldots,{ \bf R }$ that
\begin{equation*}
	\E{ e^{\delta T(l,r)} } \le \exp\left\{12 \sqrt{e}\alpha(\underline{q})^{1/a}(r-1) + \delta^2 e V r\right\}.
\end{equation*}
By \eqref{EqBernsteinLattice0a}, $P_k < P_k + Q_k < n_k$ for each $k=1,\ldots,N$ which implies by \eqref{EqBernsteinLattice1} that both $
P_k < n_k $  and $R_k < 2n_k /P_k $. For $a=1+r, r={ \bf R },$ we therefore have the two relations $1 > \frac{1}{a} > { \bf P } / [ (2^N+1){ \bf n } ]$ and ${ \bf R }\le 2^N { \bf n } / { \bf P }$. Hence, for the choice $r= { \bf R }$ we arrive at (using that $0 < \alpha(\underline{q}) \leq 1/4$)
\begin{eqnarray*}
\E{ e^{\delta T(l,{ \bf R })} }
& \leq &
\exp\left\{  12\sqrt{e}\alpha(\underline{q})^{ { \bf P } / [ (2^N+1){ \bf n }] }
\left(2^N\frac{{ \bf n }}{{ \bf P }}-1\right)
+
\delta^2eV 2^N \frac{{ \bf n }}{{ \bf P }} \right\}.
\end{eqnarray*}
Using $\delta = 2^N \beta$:
\begin{equation*}
\E{ e^{2^N \beta T(l,{ \bf R })} } \le \exp\left \{ 2^{3N} \beta^2 e V  \frac{{ \bf n }}{{ \bf P }}+12 \sqrt{e} \alpha(\underline{q})^{ { \bf P } / [(2^N+1) { \bf n } ]} \left(2^N \frac{{ \bf n }}{{ \bf P }}-1 \right) \right\}
\end{equation*}
Returning to \eqref{EqBernsteinLattice4} and using Markov's inequality, we have
\begin{align*}
\p\left( |S_{n}| \ge \varepsilon \right) &= \p\left( S_{n} \ge \varepsilon \right) + \p\left(-S_{n} \ge \varepsilon \right)  = \p\left( e^{\beta S_{n}} \ge e^{\beta \varepsilon} \right) + \p\left( e^{-\beta S_{n}} \ge e^{\beta \varepsilon} \right) \\
& \le e^{-\beta \epsilon} \left\{ \E{ e^{\beta S_{n}}} + \E{ e^{-\beta S_{n}} } \right\} 
\end{align*}
Now, if we change $Z(s)$ to $-Z(s)$, all results remain valid, therefore we have in \eqref{EqBernsteinLattice4}
\begin{eqnarray*}
\p\left( |S_{n}| \ge \varepsilon \right)
& \leq &
e^{-\beta \varepsilon}
\left \{
2^{-N}
\sum_{l=1}^{2^N}
\left(
 \E{ e^{2^N \beta T_{l,{ \bf R } } } }
+
\E{e^{-2^N \beta T_{l,{ \bf R } } } }
\right)
\right \}\\
& \leq &
2 e^{-\beta \varepsilon}
\exp
\left \{
2^{3N} \beta^2 e V \frac{{ \bf n }}{{ \bf P }}
+
12\sqrt{e}\alpha(\underline{q})^{ { \bf P } / [ (2^N+1){ \bf n } ] }
\left ( 2^N \frac{ { \bf n } }{ { \bf P } } - 1 \right )
\right \}.
\end{eqnarray*}
Recalling the definition of  $V$ this immediately implies \eqref{EqBernsteinLattice0b}.
\end{proof}

We can formulate the following extension of the above Bernstein inequality for unbounded random variables
\begin{theorem}\label{extBernstein}
Let $\{Z(s): s\in I \}$ be a strongly mixing random field with $\E{Z(s)} = 0$ and $\E{Z(s)^2} \le \sigma^2 < \infty$. Furthermore, assume that the tail distribution is bounded uniformly in $s$ by
\begin{align}
		\p(|Z(s)| \ge z ) \le \kappa_0 \exp\left( - \kappa_1 z^{\tau} \right) \label{tailBound}
\end{align}
for $\kappa_0,\kappa_1, \tau >0$. Then for any $B>0$ it is true that
\begin{align*}
		\p\left( \left| \sum_{s\in I_n} Z(s) \right| \ge \epsilon \right) &\le \frac{12}{\epsilon \tau} \kappa_0  \kappa_1^{-1/ \tau } \Gamma\left( \tau^{-1}, \kappa_1 B^{\tau} \right) { \bf n } + 2 \exp\left\{ 12 \sqrt{e}2^N \frac{{ \bf n }}{{ \bf P }} \alpha(\underline{q})^{ { \bf P }\big/\left[{ \bf n } \left(2^N+1\right) \right] } \right\} \\
		&\qquad\qquad\qquad\qquad \cdot \exp\left\{ - \frac{1}{3} \beta \epsilon \right\} \cdot \exp\Big\{	2^{3N}\beta^2 e\left( \sigma^2 + 48 B^2 \gamma \bar{\alpha}_{\overline{p}} \right) { \bf n }	\Big\},
\end{align*}
where $\Gamma$ denotes the upper incomplete gamma function.
\end{theorem}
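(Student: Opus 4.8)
The plan is to reduce the unbounded case to Theorem~\ref{BernsteinLattice} by a truncation argument. Fix the threshold $B>0$ from the statement and write $Z(s) = Z^{(1)}(s) + Z^{(2)}(s)$, where $Z^{(1)}(s) := Z(s)\1{|Z(s)| \le B} - \E{Z(s)\1{|Z(s)|\le B}}$ is the centered truncated part and $Z^{(2)}(s) := Z(s)\1{|Z(s)| > B} - \E{Z(s)\1{|Z(s)|>B}}$ is the centered tail part (note $\E{Z^{(1)}(s)} = \E{Z^{(2)}(s)} = 0$ since $\E{Z(s)}=0$). Then $\sum_{s\in I_n} Z(s) = \sum_{s\in I_n} Z^{(1)}(s) + \sum_{s\in I_n} Z^{(2)}(s)$, and by a union bound
\begin{align*}
\p\!\left( \left|\sum_{s\in I_n} Z(s)\right| \ge \epsilon \right) \le \p\!\left( \left|\sum_{s\in I_n} Z^{(1)}(s)\right| \ge \tfrac{2}{3}\epsilon \right) + \p\!\left( \left|\sum_{s\in I_n} Z^{(2)}(s)\right| \ge \tfrac{1}{3}\epsilon \right).
\end{align*}
The splitting of $\epsilon$ into $\frac23\epsilon$ and $\frac13\epsilon$ is what produces the factor $\frac13\beta\epsilon$ in the exponent of the final bound; one keeps $\beta \cdot \frac23\epsilon$ for the main term, but then must absorb an extra factor in $V$, which is where the $\sigma^2 + 48 B^2\gamma\bar\alpha_{\overline p}$ (as opposed to $\sigma^2 + 12 B^2\gamma\bar\alpha_{\overline p}$) comes from.

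For the first summand, the field $\{Z^{(1)}(s)\}$ is bounded: $|Z^{(1)}(s)| \le 2B$, has mean zero, and is strongly mixing with mixing coefficients no larger than $\alpha(k)$, since $Z^{(1)}(s)$ is a measurable function of $Z(s)$. Its variance satisfies $\E{Z^{(1)}(s)^2} \le \E{Z(s)^2\1{|Z(s)|\le B}} \le \sigma^2$. Applying Theorem~\ref{BernsteinLattice} to $\{Z^{(1)}(s)\}$ with bound $2B$ in place of $B$ and error level $\frac23\epsilon$ gives a bound of the form
\begin{align*}
\p\!\left( \left|\sum_{s\in I_n} Z^{(1)}(s)\right| \ge \tfrac{2}{3}\epsilon \right) \le 2\exp\!\left\{ 12\sqrt{e}\,2^N \tfrac{{\bf n}}{{\bf P}} \alpha(\underline q)^{{\bf P}/[{\bf n}(2^N+1)]} \right\} \cdot \exp\!\left\{ -\tfrac23\beta\epsilon + 2^{3N}\beta^2 e\left( \sigma^2 + 48 B^2\gamma\bar\alpha_{\overline p}\right){\bf n} \right\},
\end{align*}
where the $48 = 12\cdot 4$ reflects replacing $B^2$ by $(2B)^2$ in \eqref{EqBernsteinLattice9}; since $e^{-\frac23\beta\epsilon} \le e^{-\frac13\beta\epsilon}$ this is dominated by the claimed expression. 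The admissibility condition $2^{N+1}(2B){\bf P} e\beta < 1$ is implicitly required here, and can be folded into the hypotheses exactly as in Theorem~\ref{BernsteinLattice}.

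For the second (tail) summand, I would use Markov's inequality rather than an exponential bound, since the tail part contributes only polynomially:
\begin{align*}
\p\!\left( \left|\sum_{s\in I_n} Z^{(2)}(s)\right| \ge \tfrac13\epsilon \right) \le \frac{3}{\epsilon}\,\E{\left|\sum_{s\in I_n} Z^{(2)}(s)\right|} \le \frac{3}{\epsilon} \sum_{s\in I_n} \E{|Z^{(2)}(s)|} \le \frac{6}{\epsilon}\sum_{s\in I_n} \E{|Z(s)|\1{|Z(s)|>B}},
\end{align*}
using $\E{|Z^{(2)}(s)|} \le 2\E{|Z(s)|\1{|Z(s)|>B}}$. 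The remaining step is to estimate $\E{|Z(s)|\1{|Z(s)|>B}}$ using the tail bound \eqref{tailBound}: by the layer-cake formula,
\begin{align*}
\E{|Z(s)|\1{|Z(s)|>B}} = B\,\p(|Z(s)|>B) + \int_B^\infty \p(|Z(s)|>z)\intd z \le \kappa_0 B e^{-\kappa_1 B^\tau} + \kappa_0\int_B^\infty e^{-\kappa_1 z^\tau}\intd z,
\end{align*}
and the substitution $w = \kappa_1 z^\tau$ turns the integral into $\frac{1}{\tau}\kappa_1^{-1/\tau}\Gamma(\tau^{-1}, \kappa_1 B^\tau)$ (a short computation shows the boundary term $\kappa_0 B e^{-\kappa_1 B^\tau}$ is also absorbed into the same incomplete-gamma expression, or one bounds it crudely by it up to the universal constant already present). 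Summing over $s\in I_n$ contributes the factor ${\bf n}$, and collecting constants yields the term $\frac{12}{\epsilon\tau}\kappa_0\kappa_1^{-1/\tau}\Gamma(\tau^{-1},\kappa_1 B^\tau){\bf n}$. Adding the two pieces gives the stated inequality.

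The only real obstacle is bookkeeping: one must verify that the truncated field genuinely inherits $\alpha$-mixing with the same (or smaller) coefficients — immediate, since $Z^{(1)}(s)$ and $Z^{(2)}(s)$ are $\sigma(Z(s))$-measurable so $\cF^{(i)}(I) \subseteq \cF(I)$ — and one must track the constants through the $\epsilon/3$ split carefully to land exactly on the factors $\tfrac13$ and $48$ in the statement. Everything else is a routine application of Theorem~\ref{BernsteinLattice} plus a standard moment estimate from a sub-Weibull tail.
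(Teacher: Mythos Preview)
Your approach is essentially the paper's: truncate, apply Theorem~\ref{BernsteinLattice} to the centered bounded part with effective bound $2B$ (whence the $48 = 12\cdot(2B)^2/B^2$), and control the tail part by Markov plus the layer-cake formula and the substitution $w=\kappa_1 z^\tau$ to produce the incomplete gamma term. The only structural difference is that the paper uses a \emph{three}-way split $Z = Z^{\#}+Z^{*}+Z^{0}$ with $Z^{\#}=(Z-B)_+$, $Z^{*}=-(Z+B)_-$, $Z^{0}=\max(\min(Z,B),-B)$ and an equal $\epsilon/3$ allocation, rather than your two-way split $Z\1{|Z|\le B}$ versus $Z\1{|Z|>B}$ with a $\tfrac23\epsilon,\tfrac13\epsilon$ allocation.

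This matters for the constants: because the paper's tail pieces are the \emph{overshoots} $(Z-B)_+$ and $(-Z-B)_+$, one has directly $\E{Z^{\#}}=\int_B^\infty \p(Z\ge z)\,\intd z$ with no boundary term, and the two tail pieces together give $\tfrac{6}{\epsilon}{\bf n}+\tfrac{6}{\epsilon}{\bf n}=\tfrac{12}{\epsilon}{\bf n}$ times the gamma expression exactly. Your split instead produces $\E{|Z|\1{|Z|>B}}=B\,\p(|Z|>B)+\int_B^\infty \p(|Z|>z)\,\intd z$, and the boundary contribution $\kappa_0 B e^{-\kappa_1 B^\tau}$ is \emph{not} in general dominated by $\tfrac{1}{\tau}\kappa_1^{-1/\tau}\Gamma(\tau^{-1},\kappa_1 B^\tau)$ (for $\tau=1$ this would require $\kappa_1 B\le 1$), so your parenthetical claim that it is ``absorbed'' does not hold as stated. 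The fix is either to accept an additional additive term, or --- cleaner --- to adopt the paper's overshoot decomposition, which lands on the stated constant $12$ without any such residue. Apart from this bookkeeping point your argument is correct.
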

\begin{proof}
We split each $Z(s)$: choose an arbitrary bound $B>0$ and define for $s\in\Z^N$
\begin{align*}
		Z(s)^{\#} \coloneqq Z(s) - \min( Z(s), B) \ge 0, \quad Z(s)^{*} &\coloneqq Z(s) - \max(Z(s), -B) \le 0 \\
		&\text{ and } Z(s)^0 \coloneqq \max( \min(Z(s), B), -B).
\end{align*}
Then, $Z(s) = Z(s)^{\#} + Z(s)^{*} + Z(s)^0$ and $0 = \E{Z(s)} = \E{Z(s)^{\#}} + \E{Z(s)^{*}} + \E{Z(s)^0}$. Thus,
\begin{align}
	&\p\left( \left| \sum_{s\in I_n} Z(s) \right|  \ge \epsilon		\right) = \p\left( \left|\sum_{s \in |I_n | } Z(s) - \E{Z(s)} \right| \ge \epsilon \right) \nonumber \\
	\begin{split}
	&\le \p\left( \left| \sum_{s \in |I_n | } Z(s)^{\#} - \E{Z(s)^{\#}} \right| \ge \frac{\epsilon}{3} \right) + \p\left( \left| \sum_{s \in |I_n |  } Z(s)^{*} - \E{Z(s)^{*}} \right| \ge \frac{\epsilon}{3} \right) \\
	&\quad + \p\left( \left| \sum_{s\in |I_n |  } Z(s)^0 - \E{Z(s)^0} \right | \ge \frac{\epsilon}{3} \right). \label{extUSLLNM1}
\end{split}\end{align}
We treat each term in \eqref{extUSLLNM1} separately. We consider the first two terms. We obtain with Markov's inequality
\begin{align}
		\p\left( \left| \sum_{s \in |I_n | } Z(s)^{\#} - \E{Z(s)^{\#}} \right| \ge \frac{\epsilon}{3} \right) \le \frac{3}{\epsilon} \E{ \left| \sum_{s \in |I_n | } Z(s)^{\#} - \E{Z(s)^{\#}} \right| } \le  \frac{6 |I_n| }{\epsilon} \E{ Z(s)^{\#} }. \label{extUSLLNM2}
\end{align}
Using the tail condition, we can estimate the expectation in \eqref{extUSLLNM2} by
\begin{align}
		&\E{Z(s)^{\#}} = \int_0^{\infty} \p\left( Z(s)^{\#} \ge z \right) \intd{z} \nonumber \\
		&= \int_0^{\infty} \p\left( (Z(s) - B) 1_{\{Z(s) \ge B\} } \ge z \right) \intd{z} = \int_B^{\infty} \p\left( Z(s)  \ge z \right) \intd{z} \nonumber\\
		&\le \kappa_0 \int_B^{\infty} \exp\left( -\kappa_1 z^{\tau} \right) \intd{z} = \kappa_0 \int_{\kappa_1 B^{\tau}}^{\infty} \frac{1}{\tau} \left(\frac{1}{\kappa_1}\right)^{ 1 / \tau } y^{\frac{1}{\tau}-1} e^{-y} \intd{y} = \frac{\kappa_0}{\tau} \left(\frac{1}{\kappa_1}\right)^{ 1/\tau } \Gamma\left( \frac{1}{\tau}, \kappa_1 B^{\tau} \right). \nonumber
\end{align}
Since $\sigma(Z(s)^0: s\in I) \subseteq \sigma(Z(s): s\in I)$ for any $I \subseteq \Z^N$, the mixing coefficient of the field $\{Z(s)^0: s\in \Z^N\}$ can be estimated by those of $\{Z(s): s\in \Z^N\}$. Furthermore, $\operatorname{Var}(Z(s)^0) \le \sigma^2$ and we can apply Theorem \ref{BernsteinLattice} to the third term of \eqref{extUSLLNM1}, using that $\left|Z(s)^0 - \E{Z(s)^0} \right | \le 2B$. Hence,
\begin{align*}
	&\p\left( \left| \sum_{s \in |I_n |  } Z(s)^0 - \E{Z(s)^0} \right | \ge \frac{\epsilon}{3} \right) \\
	&\le 2 \exp\left\{ 12 \sqrt{e}2^N \frac{{ \bf n }}{{ \bf P }} \alpha(\underline{q})^{  { \bf P } \big/ \left[ { \bf n }(2^N+1) \right] } \right\} \cdot \exp\left\{ - \frac{\epsilon}{3} \beta \right\} \cdot \exp\left\{	2^{3N}\beta^2 e\left(\sigma^2 + 48  B^2 \gamma  \bar{\alpha}_p\right) { \bf n }	\right\}.
	\end{align*}
This finishes the proof.
\end{proof}

We give a result which is an immediate consequence of Theorem \ref{BernsteinLattice}: 
\begin{corollary}\label{applBernstein}
Let the real valued random field $Z$ have $\alpha$-mixing coefficients which are exponentially decreasing, i.e., there are $c_0, c_1 \in \R_+$ such that $\alpha(k ) \le c_0 \exp( - c_1 k)$. The $Z(s)$ have expectation zero and are bounded by $B$. Moreover, $\E{Z(s)^2}\le \sigma^2$. Let $n\in \N^N$ be such that both
$$\min_{1\le i \le N} n_i \ge e^2 \text{ and } \frac{ \min\{ n_i: i=1,\ldots,N \}}{ \max\{ n_i: i=1,\ldots,N \} } \ge C',$$
for a constant $C'> 0$. Then there are constants $A_1, A_2\in \R_+$ which depend on the lattice dimension $N$, the constant $C'$ and the bound on the mixing coefficients but not on $n\in\N^N$ and not on $B$ such that for all $\epsilon> 0$ 
\begin{align*}
		\p\left( \left| \sum_{s\in I_n} Z(s) \right| \ge \epsilon \right) \le A_1 \exp\left( - \frac{A_2 \epsilon^2}{   (\sigma^2 + B^2){\bf n} +  B\epsilon \,   {\bf n} ^{N/(N+1)} \left(\prod_{i=1}^N \log n_i \right) } \right).
\end{align*}
\end{corollary}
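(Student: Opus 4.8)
The plan is to apply Theorem~\ref{BernsteinLattice} with one carefully tuned pair of block sequences and then to optimise the resulting exponent over~$\beta$. I fix a constant $C\ge 1$ depending only on $N$, $C'$ and the mixing bounds (to be chosen large at the very end) and set, for each coordinate $k$,
$$
 P_k(n_k):=\bigl\lceil C\, n_k^{N/(N+1)}\log n_k\bigr\rceil ,
 \qquad
 Q_k(n_k):=\bigl\lceil n_k^{N/(N+1)}\bigr\rceil .
$$
Both are non-decreasing in $n_k$, and ${\bf P}=P_1(n_1)\cdots P_N(n_N)$ obeys $C^N{\bf n}^{N/(N+1)}\prod_{i}\log n_i\le{\bf P}\le 2^N C^N{\bf n}^{N/(N+1)}\prod_{i}\log n_i$, which is precisely the order of the $B\epsilon$–term in the denominator of the claim. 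Using $n_i\ge e^2$ (hence $\log n_i\ge 2$) one checks $Q_k\le P_k$ for all $k$, and $P_k+Q_k<n_k$ whenever every $n_k$ exceeds a threshold $n_0$ depending on $N$, $C'$ and the mixing bounds, so that~\eqref{EqBernsteinLattice0a} holds. If some $n_k<n_0$, the balance hypothesis forces every $n_i$ into a bounded range, whence ${\bf n}\le(n_0/C')^N$ and $\bigl|\sum_{s\in I_n}Z(s)\bigr|\le B{\bf n}$; since then $\p(|\sum_{s\in I_n}Z(s)|\ge\epsilon)=0$ for $\epsilon>B{\bf n}$, while on $\epsilon\le B{\bf n}$ the ratio of $\epsilon^2$ to the claimed denominator is at most ${\bf n}\le(n_0/C')^N$, the assertion for these finitely many configurations follows after enlarging $A_1$. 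So I assume henceforth~\eqref{EqBernsteinLattice0a}.

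Exponential decay of the mixing coefficients gives $\bar\alpha_{\overline{p}}\le\sum_{u\ge 1}u^{N-1}c_0e^{-c_1u}=:\mathfrak{a}<\infty$, a constant, so that $\sigma^2+12B^2\gamma\,\bar\alpha_{\overline{p}}\le(1+12\gamma\,\mathfrak{a})(\sigma^2+B^2)$. For the first exponential factor in~\eqref{EqBernsteinLattice0b} I set $t:={\bf P}/[{\bf n}(2^N+1)]\in(0,1)$; from $\alpha(\underline{q})\le c_0e^{-c_1\underline{q}}$ one gets $\alpha(\underline{q})^t\le\max(1,c_0)\exp(-c_1\underline{q}\,t)$, so it suffices to bound $\log({\bf n}/{\bf P})-c_1\underline{q}\,t$ from above by a constant. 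Since ${\bf P}\ge{\bf n}^{N/(N+1)}$, $\log({\bf n}/{\bf P})\le\tfrac1{N+1}\log{\bf n}$. On the other hand $\underline{q}\ge(\min_i n_i)^{N/(N+1)}$ and ${\bf n}^{-1/(N+1)}\ge(\max_i n_i)^{-N/(N+1)}$, so together with $\min_i n_i\ge C'\max_i n_i$ and $\prod_i\log n_i\ge(\log\min_i n_i)^N$ one finds $\underline{q}\,{\bf P}/{\bf n}\ge C^N(C')^{N/(N+1)}(\log\min_i n_i)^N$, while $\log{\bf n}\le N(1+\tfrac12\log(1/C'))\log\min_i n_i$. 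Because $(\log\min_i n_i)^{N-1}\ge 1$, choosing $C$ large enough (only in terms of $N$, $C'$ and the mixing bounds) forces $c_1\underline{q}\,t\ge\tfrac1{N+1}\log{\bf n}$, hence $\log({\bf n}/{\bf P})-c_1\underline{q}\,t\le 0$; thus the first factor is bounded by $2\exp(12\sqrt{e}\,2^N\max(1,c_0))=:A_1$.

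It remains to pick $\beta$. I write $V:=2^{3N}e(\sigma^2+12B^2\gamma\,\bar\alpha_{\overline{p}}){\bf n}$ and $\beta_{\max}:=(2^{N+2}B{\bf P}e)^{-1}$, so that $2^{N+1}B{\bf P}e\,\beta_{\max}=\tfrac12<1$; one may assume $B>0$ and $V>0$, the remaining cases being trivial. Applying~\eqref{EqBernsteinLattice0b} with $\beta:=\min(\beta_{\max},\,\epsilon/(2V))$, the quadratic $\beta\epsilon-V\beta^2$ is maximised over $(0,\beta_{\max}]$ either at $\epsilon/(2V)$ (if this is $\le\beta_{\max}$) or at $\beta_{\max}$, and in both cases $-\beta\epsilon+V\beta^2\le-\min\bigl(\tfrac{\epsilon^2}{4V},\tfrac{\beta_{\max}\epsilon}{2}\bigr)\le-\epsilon^2/(2\epsilon/\beta_{\max}+4V)$, the last step by $\min(a,b)\ge ab/(a+b)$. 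Substituting $1/\beta_{\max}=2^{N+2}B{\bf P}e$, the bound ${\bf P}\le 2^N C^N{\bf n}^{N/(N+1)}\prod_i\log n_i$ and $\sigma^2+12B^2\gamma\,\bar\alpha_{\overline{p}}\le(1+12\gamma\,\mathfrak{a})(\sigma^2+B^2)$, the denominator $2\epsilon/\beta_{\max}+4V$ is at most $K\bigl[B\epsilon\,{\bf n}^{N/(N+1)}\prod_i\log n_i+(\sigma^2+B^2){\bf n}\bigr]$ for some constant $K$ depending only on $N$, $C'$ and the mixing bounds. With $A_2:=1/K$, combining with the prefactor bound $A_1$ yields the claim (after, as noted above, enlarging $A_1$ once more to absorb the boundedly many excluded configurations).

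I expect the block tuning of the second step to be the crux. The sequences $P$ and $Q$ must be chosen so that ${\bf P}$ has exactly the order ${\bf n}^{N/(N+1)}\prod_i\log n_i$ required by the conclusion, while simultaneously $\underline{q}\,{\bf P}/{\bf n}$ dominates $\log{\bf n}$ (to keep the first exponential factor bounded) and~\eqref{EqBernsteinLattice0a} stays valid. The balance hypothesis $\min_i n_i/\max_i n_i\ge C'$ is what allows one to interchange $\min_i n_i$, $\max_i n_i$ and ${\bf n}^{1/N}$ up to constants, and in particular to let the per–coordinate logarithms combine into the factor $\prod_i\log n_i$; the lower bound $n_i\ge e^2$ is used to keep those logarithms bounded away from $1$, so that positive powers of $\log\min_i n_i$ may be dropped.
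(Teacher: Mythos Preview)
Your proof is correct and follows essentially the same route as the paper's: apply Theorem~\ref{BernsteinLattice} with block sizes of order $n_k^{N/(N+1)}\log n_k$, show the first exponential factor in~\eqref{EqBernsteinLattice0b} is bounded uniformly in $n$ via the balance assumption, use the summability of $u^{N-1}\alpha(u)$ to control $\bar\alpha_{\overline{p}}$, and finally optimise over $\beta$ subject to the constraint $2^{N+1}B{\bf P}e\beta<1$.

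The differences are cosmetic rather than structural. The paper sets $P_i=Q_i=\lfloor n_i^{N/(N+1)}\log n_i\rfloor$ and argues that $\log({\bf n}/{\bf P})-c\,\underline{q}\,{\bf P}/{\bf n}\to-\infty$ as $\underline{n}\to\infty$, whereas you decouple $Q_k=\lceil n_k^{N/(N+1)}\rceil$ from $P_k$ and introduce an explicit constant $C$ in $P_k$ that you tune so the same difference is non-positive for \emph{all} admissible $n$; both yield the same bounded prefactor. For the optimisation in $\beta$, the paper inserts the factor $(1-2^{N+1}B{\bf P}e\beta)^{-1}$ and plugs in the approximate minimiser, while you split cases and use $\min(a,b)\ge ab/(a+b)$; the resulting denominators agree up to constants. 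Your treatment is in fact a bit more careful than the paper's in two places: you explicitly verify~\eqref{EqBernsteinLattice0a} (which can fail for small $n_k$ with the paper's choice when $N$ is small) and you dispose of the finitely many excluded configurations by enlarging $A_1$.
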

\begin{proof}[Proof of Corollary~\ref{applBernstein}]
Define $P_i(n_i) := Q_i(n_i) := \floor*{ n_i^{N/(N+1)}  \, \log n_i }$ for $i=1,\ldots,N$. Furthermore, we denote the smallest coordinate of $n\in \N^N$ by $\underline{n} := \min_{1\le i \le N} n_i$ and the largest coordinate by $\overline{n} = \max_{1\le i \le N} n_i$. Note that $\overline{n} \rightarrow \infty $ implies that $\underline{n} \rightarrow \infty$. We consider the first factor on the RHS of \eqref{EqBernsteinLattice0b} and show that under the stated conditions
\begin{align}\label{convTildeN0}
	\sup \left\{ \exp\left( 12 \sqrt{e} 2^N \frac{{ \bf n }}{{ \bf P }} \alpha(\underline{q})^{ { \bf P }\big/ \left[{ \bf n}(2^N+1) \right] } 		\right): n\in \Z^N, \underline{n}\ge e^2 \right\} < \infty.
\end{align}
By assumption we have that $\alpha(\underline{q}) \le c_1 \exp( -c_2 \underline{q} )$, for two constants $c_1, c_2 \in \Rplus$ and $\underline{q} = \min_{1\le i \le N} Q_i$. Therefore it suffices to show that
\begin{align} \label{convTildeN1}
		\log ( { \bf n } / { \bf P } ) - c_2 /( 2^N+1)\, \underline{q}\, { \bf P } / { \bf n } \rightarrow - \infty \text{ as } \underline{n} \rightarrow \infty.
\end{align}
Note that for $a,b \ge 2$, we have $ab \ge a+b$. Thus, for $\prod_{i=1}^N \log n_i \ge \sum_{i=1}^N \log n_i$ if $\underline{n}$ is at least $e^2$. We make the definition $\eta := N/(N+1)$. Let $\underline{n} \ge e^2$, then for any constant $c\in \R_+$
\begin{align}
		&\log\left(	\left( \prod_{i=1}^N n_i \right)^{1-\eta} \left(\prod_{i=1}^{N} \log n_i \right)^{-1}	\right) - c (\underline{n})^{ \eta} \log \underline{n} \left(	\prod_{i=1}^N n_i \right)^{\eta-1} \left(\prod_{i=1}^N \log n_i \right) \nonumber \\
		&\le (N+1)^{-1} \sum_{i=1}^N \log n_i - c  \frac{(\underline{n})^{ \eta + (\eta-1)} }{ (\overline{n})^{ (N-1)(1-\eta)}} \left(	\log \underline{n} \prod_{i=1}^N \log n_i	\right) \nonumber \\
		&\le (N+1)^{-1} \prod_{i=1}^N \log n_i -c \left(\frac{\underline{n}}{\overline{n}} \right)^{(N-1)/(N+1)} \left(	\log \underline{n} \prod_{i=1}^N \log n_i	\right) \nonumber \\ 
		&= \left(	(N+1)^{-1} - c \left(\frac{\underline{n}}{\overline{n}} \right)^{(N-1)/(N+1)} \, \log \underline{n}	\right) \prod_{i=1}^N \log n_i \rightarrow - \infty \text{ as } \underline{n} \rightarrow \infty. \nonumber
\end{align}
This proves \eqref{convTildeN1} and consequently, that \eqref{convTildeN0} is finite. Thus, we arrive at
\begin{align*}
			\p\left( \left| \sum_{s\in I_n} Z(s) \right| \ge \epsilon \right) \le A_1 \exp\left(- \beta \epsilon + A_2 \beta^2 (\sigma^2 + B^2 ) {\bf n}  		\right) \le A_1 \exp\left( - \beta \epsilon + \frac{1}{2} \frac{2 A_2  (\sigma^2 + B^2 ) {\bf n} \beta^2 }{ 1- 2^{N+1} B {\bf P}  e \beta} \right)
\end{align*} 
for all $\beta>0$ which satisfy $2^{N+1} B {\bf P}  e \beta < 1$, for all $\epsilon>0$ and for two constants $A_1,A_2$ which are independent of $B$, $\beta$, $\epsilon$ and ${\bf n} $. The choice $\beta_0 \coloneqq \epsilon /( 2 A_2  (\sigma^2 + B^2 ) {\bf n} + \epsilon 2^{N+1} B {\bf P}  e ) $ approximately minimizes this last bound and we obtain the desired result if we use additionally that ${\bf P}  \le {\bf n} ^{N/(N+1)} \left(\prod_{i=1}^N \log n_i \right)$. 
\end{proof}

\appendix
\section{Appendix}
Davydov's inequality relates the covariance of two random variables to the $\alpha$-mixing coefficient:
\begin{proposition}[\cite{davydov1968convergence}]\label{davydovsIneq}
Let $\pspace$ be a probability space and let $\cG, \cH \subseteq \cA$ be sub-$\sigma$-algebras. Denote by $\alpha := \sup\{ |\p(A \cap B) - \p(A)\p(B) |:\, A \in \cG, B\in \cH \}$ the $\alpha$-mixing coefficient of $\cG$ and $\cH$. Let $p,q,r \ge 1$ be H{\"o}lder conjugate, i.e., $p^{-1} + q^{-1} + r^{-1} =1$. Let $\xi$ (resp. $\eta$) be in $L^p(\p)$ and $\cG$-measurable (resp. in $L^q(\p)$ and $\cH$-measurable). Then $\left| \text{Cov}(\xi,\eta) \right|  \le 12\, \alpha^{1/r} \norm{\xi}_{L^p(\p)} \norm{\eta}_{L^q(\p)}$.
\end{proposition}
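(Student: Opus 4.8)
The plan is to reduce the general $L^{p}$--$L^{q}$ statement to the case of bounded variables by two successive truncations, so I would start with the bounded case: if $\xi$ is $\cG$-measurable, $\eta$ is $\cH$-measurable and both are essentially bounded, then $|\text{Cov}(\xi,\eta)|\le 4\,\alpha\,\norm{\xi}_{\infty}\norm{\eta}_{\infty}$. After the affine substitution $\xi'=\tfrac{1}{2}(\xi/\norm{\xi}_{\infty}+1)$, $\eta'=\tfrac{1}{2}(\eta/\norm{\eta}_{\infty}+1)$ --- so that $\xi',\eta'$ are $\cG$- resp.\ $\cH$-measurable with values in $[0,1]$ and $\text{Cov}(\xi,\eta)=4\norm{\xi}_{\infty}\norm{\eta}_{\infty}\,\text{Cov}(\xi',\eta')$ --- it suffices to show $|\text{Cov}(\xi',\eta')|\le\alpha$. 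For this I would use the layer-cake identities $\xi'=\int_{0}^{1}\1{\xi'>t}\intd{t}$, $\eta'=\int_{0}^{1}\1{\eta'>s}\intd{s}$ and Fubini's theorem to get $\text{Cov}(\xi',\eta')=\int_{0}^{1}\int_{0}^{1}\text{Cov}(\1{\xi'>t},\1{\eta'>s})\intd{t}\intd{s}$; since $\{\xi'>t\}\in\cG$ and $\{\eta'>s\}\in\cH$, each integrand lies in $[-\alpha,\alpha]$ by the definition of the $\alpha$-mixing coefficient, and integrating over the unit square closes this case.

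Next I would allow $\eta\in L^{q}(\p)$ while keeping $\xi$ bounded. Since $p^{-1}+q^{-1}+r^{-1}=1$ with all exponents in $[1,\infty]$, the sub-cases in which one exponent equals $1$ or $\infty$ are already covered by the bounded case, so I may assume $1<p,q,r<\infty$. Truncating $\eta$ at level $T>0$, write $\eta=\eta_{T}+(\eta-\eta_{T})$ with $\eta_{T}:=(\eta\wedge T)\vee(-T)$. The bounded case gives $|\text{Cov}(\xi,\eta_{T})|\le 4\alpha T\norm{\xi}_{\infty}$, while $|\xi|\le\norm{\xi}_{\infty}$ yields $|\text{Cov}(\xi,\eta-\eta_{T})|\le 2\norm{\xi}_{\infty}\,\E{|\eta-\eta_{T}|}$, and together with $|\eta-\eta_{T}|\le|\eta|\,\1{|\eta|>T}$ and the Markov-type bound $\E{|\eta|\,\1{|\eta|>T}}\le T^{1-q}\norm{\eta}_{q}^{q}$ this gives $|\text{Cov}(\xi,\eta-\eta_{T})|\le 2\norm{\xi}_{\infty}\norm{\eta}_{q}^{q}\,T^{1-q}$. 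Choosing $T$ to balance the two contributions produces $|\text{Cov}(\xi,\eta)|\le C\,\alpha^{1-1/q}\norm{\xi}_{\infty}\norm{\eta}_{q}$ for an absolute constant $C$.

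For the general case $\xi\in L^{p}(\p)$, $\eta\in L^{q}(\p)$, I would truncate $\xi$ at a level $S>0$, write $\xi=\xi_{S}+(\xi-\xi_{S})$, apply the previous step to $\text{Cov}(\xi_{S},\eta)$, and for the remainder split $\text{Cov}(\xi-\xi_{S},\eta)=\E{(\xi-\xi_{S})\eta}-\E{\xi-\xi_{S}}\,\E{\eta}$ and apply H\"older's inequality with the conjugate pair $(p',q)$, $1/p'=1-1/q$, to each summand, so that $|\text{Cov}(\xi-\xi_{S},\eta)|\le 2\norm{\xi-\xi_{S}}_{p'}\norm{\eta}_{q}$; since $p'\le p$ the tail moment obeys $\norm{\xi-\xi_{S}}_{p'}\le\big(\E{|\xi|^{p'}\1{|\xi|>S}}\big)^{1/p'}\le S^{(p'-p)/p'}\norm{\xi}_{p}^{p/p'}$. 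Balancing the two terms in $S$ and simplifying the exponents --- the power of $\alpha$ collapses to exactly $1/r$ since $(1-1/q)(1-p'/p)=1/p'-1/p=1/r$ --- then gives $|\text{Cov}(\xi,\eta)|\le 12\,\alpha^{1/r}\norm{\xi}_{L^{p}(\p)}\norm{\eta}_{L^{q}(\p)}$. A sharper variant, with a smaller constant, would instead run through Rio's covariance bound in terms of the quantile functions of $|\xi|$ and $|\eta|$, but proving that bound is itself more work.

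The bulk of this is routine. The part that needs attention is the bookkeeping in the last two steps: choosing the truncation levels, tracking how the balancing constants depend on $p,q,r$, and verifying that the accumulated multiplicative factors consolidate into a single constant that is uniformly $\le 12$ over all admissible exponent triples --- which is precisely the point where different textbook treatments end up with different constants (values such as $4$, $8$, or $10$ also appear in the literature).
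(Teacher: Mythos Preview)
The paper does not prove this proposition at all: it is stated in the Appendix as a classical result cited from \cite{davydov1968convergence}, with no proof given. There is therefore no ``paper's own proof'' to compare against.

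That said, your outline is the standard route to Davydov's inequality and is essentially correct. The bounded case via the layer-cake representation and Fubini is clean; the two-step truncation (first on $\eta$, then on $\xi$) with level-balancing is the usual argument; and your exponent check $(1-1/q)(1-p'/p)=1/p'-1/p=1/r$ is right. Your own caveat at the end is exactly the one substantive point: the accumulation of constants through the two truncations does not automatically land at $12$, and making it do so requires a slightly sharper version of the intermediate step than the rough ``balance and add'' you sketched (for instance, in the second step one should optimise rather than merely equate the two bounds, and track the resulting factor $q/(q-1)^{1-1/q}$ or similar). If you only need the inequality with \emph{some} universal constant --- which is all the paper actually uses --- your argument already delivers that; pinning the constant down to $12$ (or the smaller values you mention) is a matter of more careful arithmetic, not a missing idea.
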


\end{document}